\newtheorem{theorem}{Theorem}
\newtheorem{lemma}{Lemma}
\newtheorem{remark}{Remark}
\newtheorem{assumption}{Assumption}
\newtheorem{problem}{Problem}
\def\aa{{\alpha}}
\def\ee{{\epsilon}}
\def\be{\begin{equation}}
\def\bea{\begin{eqnarray}}
\def\beas{\begin{eqnarray*}}
\def\eea{\end{eqnarray}}
\def\eeas{\end{eqnarray*}}
\def\bi{\begin{itemize}}
\def\ee{\end{equation}}
\def\ei{\end{itemize}}
\def\z1{z^{-1}}
\def\la{\label}
\def\mR{\mathbb{R}}
\begin{document}
\title{Maximum Likelihood Localization of Radiation Sources with Unknown Source Intensity}

\author{Henry~E.~Baidoo-Williams,~\IEEEmembership{Member,~IEEE}
        %Soura~Dasgupta,~\IEEEmembership{Fellow,~IEEE,}
        %Er-Wei~Bai,~\IEEEmembership{Fellow,~IEEE,}
        %and~Raghu~Mudumbai,~\IEEEmembership{Member,~IEEE}% <-this % stops a space
\thanks{H. E. Baidoo-Williams was with the US Army Research Laboratory, Aberdeen Proving Ground, MD and 
University at Buffalo, SUNY, NY, 14150, USA e-mail: (henrybai AT buffalo DOT edu).}% <-this % stops a space
%\thanks{S. Dasgupta, E. Bai and R. Mudumbai are with University of Iowa, Iowa City, IA, 52242, USA.}% <-this % stops a space
%\thanks{Manuscript received June 20, 2016; revised --, --.}
}

\markboth{}%
{Baidoo-Williams\MakeLowercase{\textit{et al.}}: Maximum Likelihood Localization of Radiation Sources with unknown Source Intensity}
\maketitle

\begin{abstract}
In this paper, we consider a novel and robust maximum likelihood approach to localizing radiation sources with unknown statistics of the source signal strength. The result utilizes the smallest number of sensors required theoretically to localize the source.  It is shown, that should the source lie in the open convex hull of the sensors, precisely $N+1$ are required in $\mathbb{R}^N, ~N \in \{1,\cdots,3\}$. It is further shown that the region of interest, the open convex hull of the sensors,  is entirely devoid of false stationary points. An augmented gradient ascent algorithm with random projections should an estimate escape the convex hull is presented. 
\end{abstract}

\begin{IEEEkeywords}
Localization, radiation sources, non-concave, maximum likelihood, convex hull.
\end{IEEEkeywords}

\IEEEpeerreviewmaketitle

\section{Introduction}
\IEEEPARstart{P}{revious} work  on localization of radioactive sources have shown that using exactly $N+1$ sensors in $\mathbb{R}^N$ suffices to localize if the source is inside the open convex hull of the measurement sensors \cite{BDBM},\cite{BWT},\cite{Bai},\cite{BML},\cite{GML},\cite{VK}. This result however makes a key assumption -- that statistics of the source is available. In practice, however, it is highly unlikely for the signal source strength to be known a priori. Subsequent research on unknown source signal strength however have presented results that lead to multiple local maxima, leading to the algorithms performance depending on initial estimates \cite{BML}. The multiple local maxima problem is resolved in \cite{BML} by requiring the initial estimate being in the basin of attraction of the global maximum. In contrast, in this paper, a novel maximum likelihood approach which removes that requirement of the initial estimate being in the basin of attraction is presented. The result also shows that the region of interest has a unique maximum.

The impact of our result needs little motivation. We however provide a little motivation here. Consider a hostile actor with a radioactive material, through a port of entry. It is reasonable to assume unknown a priori information about the radiation source. Further, by shear economy of scale, it is desirable to use the minimum number of sensors possible across all ports of entries and all monuments of intrinsic value. In \cite{BML}, 19 such sensors are required in $2-D$ as compared to 3 in this work. In this regard, we extend the works in \cite{BDBM},\cite{BWT} and \cite{BML} to ``{\it unknown source intensity}" using the ``{\it smallest number}" of sensors possible and show that using a novel non-concave maximum likelihood based profit function, the region of interest is without a  false stationary point and has a unique global maximizer. 

The key contributions of this paper are as follows:
\begin{enumerate}
\item A novel approach to a profit function which ensures a unique maximizer is presented.
\item The smallest possible number of sensors are used for localization with uniqueness of solution guaranteed.
\item The maximum likelihood based algorithm is independent of initialization; the requirement of initialization in the basin of attraction is removed entirely.
\item A robust gradient ascent algorithm which achieves global uniform asymptotic convergence in probability is presented.
\end{enumerate}

The presentation of the rest of the paper will be organized as follows: In section \ref{sps}, the problem statement is formalized. In section \ref{scost}, a novel profit function is developed along with its gradient ascent algorithm.  In section \ref{smain}, the main results and its implications are presented. Robust simulations results are presented in section \ref{ssim}. Section \ref{scon} concludes the paper.

\section{Problem Statement} \la{sps}
Consider a  stationary radiation source located at ${\pmb y^*}\in \mR^N$, $N\in \{1,\cdots,3\}$, and $n$ measurement sensors located at ${\pmb x_i}\in \mR^N$, $N \in \{1,\cdots,3\}, i \in \{1, \cdots, n\}$. Define the distance from a measurement sensor $i$ to a radiation source $\{{\pmb y},{\pmb y^*}\}$ as 
\be \la{dist}
d_{i}=\|{\pmb x_i}- {\pmb y}\|, ~ d_{i}^*=\|{\pmb x_i}-{\pmb y^*}\|
\ee
Here, $\|\cdot\|$ denotes the 2-norm, boldfaced variables denote vectors, ${\pmb y^*}$ is the true source and ${\pmb y}$ is the estimate. The sensors measure the total gamma-ray counts received thus:
\be \la{rss}
s_i=\lambda_{i} +w_i.
\ee
In (\ref{rss}), $\lambda_{i} \sim Poisson \left(\lambda_i^*= \frac{A_i^* e^{-\aa_{i} d_{i}^*}}{d_{i}^{*2}}\right)$. $A_i^*$ is the unknown signal source intensity and its value is dependent on a number of variables; type of isotope, geometric shape, total volume of the source isotope as well as the characteristics and quality of the measurement sensors \cite{Bai}. $\aa_{i}$ is the attenuation coefficient and is a function of the density of the shielding material, if any is utilized, and the propagation channel characteristics. $w_i$, which is the background noise,  is also Poisson distributed. 

Suppose the following assumptions hold:
\begin{assumption} \la{pr0}
The source location, ${\pmb y^*}$, is  distinct from the sensor locations $x_i, i \in \{1, \cdots, n\}$.
\end{assumption} 
\begin{assumption} \la{pr1}
The source intensities $A_i^*$, $i \in \{1,\cdots,n\}$  is homogenous across all radiation sensors and denoted $A^*$.
\end{assumption}
\begin{assumption} \la{pr2}
The shielding coefficients $\aa_{i}$, $i \in \{1,\cdots,n\}$, are homogenous across all radiation sensors and denoted $\aa$.
\end{assumption}
Under assumptions \ref{pr1} and \ref{pr2},  $\lambda_{i}^*$ reduces to:
\be \la{rate}
\lambda_{i}^* =\frac{A^* e^{-\aa d_{i}^*}}{d_{i}^{*2}}
\ee
\begin{assumption} \la{amon}
For $d_{i}^*>0$, $i\in \{1,\cdots,n\}$, $\lambda_{i}^*$ is strictly decreasing and analytic. Further for every $\rho_{1}, \rho_{2} >0$, there exists $M\left(\rho_1,\rho_2\right)$ such that the $\lambda_{i}^*$ and their  first two derivatives    are all bounded in magnitude by $M\left(\rho_1,\rho_2 \right)$ whenever
%\be \la{gbd}
$\rho_1 \leq d_{i}^* \leq \rho_2$.
%\ee
\end{assumption}
It is realized that given $\aa > 0$,
\be \la{orig}
z_i=A \frac{e^{-\aa d_{i}}}{d_{i}^2}
\ee
obeys Assumption \ref{amon}. This is made clearer by noting that the gradient of (\ref{orig}) is:
\be \la{orig_der}
\dot{z}_i=-z_i \frac{1}{d_{i}} \left(\aa +\frac{2}{d_{i}} \right)
\ee
which is strictly decreasing for $\aa \ge 0$. The derivative is with respect to $d_i$. Therefore $\lambda_i^*$ is strictly decreasing given $\aa \ge 0,  i \in \{1,\cdots,n\}$.
\begin{assumption} \la{an}
The number of sensors $n$, precisely equals $N+1$ and the sensors located at ${\pmb x_i}\in \mR^N$ do not lie on an  $(N-1)$-dimensional hyperplane. 
\end{assumption}
From here on, the notation, $co\{{\pmb x_1},\cdots,{\pmb x_{N+1}}\}$ will denote the {\it open}  convex hull of location of the measurement sensors, ${\pmb x_i}, i \in \{1,\cdots,N+1\}$.
\begin{assumption} \la{aconv}
The source  location ${\pmb y^*}\in \mR^N$ is in $co\{{\pmb x_1},\cdots,{\pmb x_{N+1}}\}$.
\end{assumption}
\begin{assumption} \la{lda}
Under noise free case $s_i =z_i$.
\end{assumption}
\begin{remark}
Assumption 7 is added because in our analysis we show that under noise free case, the profit function is globally exponentially convergent using gradient ascent algorithm. Further,  nonlinear literature show that global exponential convergence under noise free case ensures that in the presence of noise there is a graceful performance in estimation accuracy degradation \cite{grandstrand:2004}.
\end{remark}
\begin{problem} \la{ps}
Under Assumptions \ref{pr0} -- \ref{lda} , given the sensor readings $s_i, \in \{1, \cdots, N+1\}$, estimate the maximum likelihood of ${\pmb y^*}$. Further, is the maximum likelihood estimate of ${\pmb y^*}$ unique?
\end{problem} 

\section{The profit function and its gradient ascent  maximization} \la{scost}
Consider an observation model which in the noise free case obeys: 
\be \la{obs}
z_i=A \frac{e^{-\aa d_{i}}}{d_{i}^2}, i \in \{1,\cdots,N+1\},~ N>0
\ee
The joint density function of the received signal strengths, $s_i$, $i\in \{1,\cdots,N+1\}$ is derived as follows\cite{GML},\cite{VK}:
\begin{equation*}
\begin{split}
&f_{s_1,\cdots,s_{N+1}}(s_1,\cdots,s_{N+1}) =\prod\limits_{i=1}^{N+1}\frac{\lambda_i^{s_i} e^{-\lambda_i}}{s_i!}\\
&\log{f_{s_1,\cdots,s_{N+1}}(s_1,\cdots,s_{N+1})}=\\
&\sum\limits_{i=1}^{N+1} s_i \log \lambda_i -\lambda_i -\log s_i!\\
\end{split}
\end{equation*}
Without loss of generality, the log-likelihood function can be modified by removing the constant term to: 
\be \la{ml}
L(A,{\pmb y}) =\sum\limits_{i=1}^{N+1} s_i \log \lambda_i -\lambda_i
\ee
From (\ref{ml}), the maximum likelihood estimate of $\{A,{\pmb y}\}$, denoted  $\{\hat{A},\hat{{\pmb y}}\}$ can be derived as:
\begin{align} \la{Aml}
\hat{A} &=\frac{\sum\limits_{i=1}^{N+1} s_i}{\sum\limits_{i=1}^{N+1} \frac{e^{-\aa \hat{d}_i}}{\hat{d}_i^2}}
\end{align}
and 
\begin{align} \la{yml}
0 &=\left. \sum\limits_{i=1}^{N+1} \left( \frac{s_i \hat{d}_i^2}{e^{-\aa \hat{d}_i}}-\hat{A}\right) \left(\aa + \frac{2}{\hat{d}_{i}} \right)\frac{e^{-\aa \hat{d}_{i}}}{\hat{d}_{i}^3} \left({\pmb x}_i-{\hat{\pmb y}} \right)\right .
\end{align}
Now consider the profit function:
\be \la{cost}
J({\pmb y})=\sum\limits_{i=1}^{N+1} s_i \log{z_i} - z_i
\ee
We note that the profit function is the same as the log-likelihood function with the constant term omitted. Now, if the ${\pmb x_i} \in  \mR^N$ do not lie on an $(N-1)$-dimensional hyperplane, then clearly the global maxima of (\ref{cost}) is a finite set including ${\pmb y}={\pmb y^*}$ with the consequence that gradient ascent maximization of (\ref{cost}) is a candidate localization algorithm. 
Suppose ${\pmb y}[k]$ is the current estimate of ${\pmb y^*}$, and sufficiently small $\mu>0$, such an algorithm will proceed as:
\be \la{grad}
{\pmb y}[k+1]={\pmb y}[k]+\mu \left .  \frac{\partial J({\pmb y})}{\partial {\pmb y}}  \right |_{{\pmb y}={\pmb y}[k]}   \forall k\geq k_0.
\ee
Whether we can estimate ${\pmb y}= {\pmb y^*}$ uniquely is the subject of discussion in section \ref{smain}.

\section{The main result}  \la{smain}
This section presents the main results in this paper starting with the following lemma.
\begin{lemma} \la{lconv}
Suppose $\{{\pmb y},{\pmb y^*}\}\in \mR^N$ obey: ${\pmb y}\neq {\pmb y^*}$\\
 then  $S=\left \{{\pmb \eta} \in \mR^N \left | \|{\pmb \eta}-{\pmb y}\| \leq {\pmb \eta}-{\pmb y^*} \|\right  .   \right \}$  defines an open half plane in $\mR^N$ with a separating hyperplane  $$\mathcal{H}=\left \{{\pmb \eta} \in \mR^N \left | \|{\pmb \eta}-{\pmb y}\| =  \|{\pmb \eta}-{\pmb y^*} \|\right  .   \right \}.$$ 
\end{lemma}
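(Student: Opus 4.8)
The plan is to reduce the nonlinear condition defining $S$ to an affine one by squaring, exploiting that the Euclidean norm is nonnegative so that $\|{\pmb \eta}-{\pmb y}\| \leq \|{\pmb \eta}-{\pmb y^*}\|$ is equivalent to $\|{\pmb \eta}-{\pmb y}\|^2 \leq \|{\pmb \eta}-{\pmb y^*}\|^2$. First I would expand both squared norms as inner products, $\|{\pmb \eta}-{\pmb y}\|^2 = \|{\pmb \eta}\|^2 - 2{\pmb \eta}^{\top}{\pmb y} + \|{\pmb y}\|^2$ and similarly for ${\pmb y^*}$. The quadratic term $\|{\pmb \eta}\|^2$ appears identically on both sides and cancels, which is the crux of why the boundary comes out flat rather than curved.

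Next I would collect the surviving terms into the form ${\pmb a}^{\top}{\pmb \eta} = b$ with ${\pmb a} = 2({\pmb y^*}-{\pmb y})$ and $b = \|{\pmb y^*}\|^2 - \|{\pmb y}\|^2$. Since the hypothesis ${\pmb y} \neq {\pmb y^*}$ forces ${\pmb a} \neq {\pmb 0}$, this equation defines a genuine, nondegenerate hyperplane, which is exactly $\mathcal{H}$; the strict inequality ${\pmb a}^{\top}{\pmb \eta} < b$ then describes one open half-space bounded by $\mathcal{H}$. Geometrically $\mathcal{H}$ is the perpendicular bisector of the segment $[{\pmb y},{\pmb y^*}]$, with normal direction ${\pmb y^*}-{\pmb y}$ and passing through the midpoint $\tfrac{1}{2}({\pmb y}+{\pmb y^*})$, a fact I would record as a sanity check.

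There is essentially no deep obstacle here; the only points that require care are bookkeeping ones. The first is confirming ${\pmb a} \neq {\pmb 0}$, so that the level set is a hyperplane rather than all of $\mathbb{R}^N$ or the empty set, and this is precisely where the nondegeneracy hypothesis ${\pmb y}\neq{\pmb y^*}$ enters. The second is reconciling the weak inequality in the displayed definition of $S$ with the word ``open'': the equality locus is the measure-zero set $\mathcal{H}$, so the strict inequality carves out the open half-space while the non-strict version yields its closure. I would therefore state the conclusion for the strict inequality to match the ``open half plane'' claim, noting that the closed version differs from it only by the boundary $\mathcal{H}$.
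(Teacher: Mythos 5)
Your proposal is correct and takes essentially the same route as the paper: both square the defining inequality, cancel the common $\|{\pmb \eta}\|^2$ term, and read off an affine condition whose boundary is the perpendicular-bisector hyperplane, the only cosmetic difference being that the paper first translates ${\pmb y^*}$ to the origin while you work with general ${\pmb y},{\pmb y^*}$. Your remark that the non-strict inequality actually defines a closed half-space (so ``open'' should attach to the strict version) is a fair bookkeeping point that the paper's own proof silently passes over.
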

\begin{proof}
Without loss of generality, suppose ${\pmb y^*} =0$. This can be attained through translation and rotation because distance measurements are invariant under translation and rotation. We abuse notation and maintain the variables ${\pmb y}$ and ${\pmb y^*}$ to preserve clarity. $\Leftrightarrow$ 
\bea \la{hpsg}
\begin{split}
({\pmb \eta} -{\pmb y})^T ({\pmb \eta} -{\pmb y}) \leq \eta^T \eta \\
2{\pmb \eta}^T {\pmb y} - {\pmb y}^T {\pmb y} \geq 0
\end{split}
\eea 
(\ref{hpsg}) is an open half plane in $\mR^N$ with a separating hyperplane of $({\pmb \eta}-\frac{1}{2}{\pmb y})^T {\pmb y} = 0$ in $\mR^{N-1}$. 
\end{proof}

\begin{theorem} \la{tmain1}
Suppose $\{{\pmb y},{\pmb y^*}\} \in \mR^N$ obeys: 

(a) ${\pmb y}\neq {\pmb y^*}$ 

(b) $ S=\left \{{\pmb \eta} \in \mR^N \left | \|{\pmb \eta}-{\pmb y}\| \leq \|{\pmb \eta}-{\pmb y^*} \|\right  .   \right \}$ and 

(c) ${\pmb y} \in S$. 

Then  ${\pmb y^*} \notin S$.
\end{theorem}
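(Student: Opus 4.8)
The plan is to read the conclusion off directly from Lemma~\ref{lconv}. By that lemma, $S$ is the closed half-space bounded by the perpendicular bisector $\mathcal{H}$ of the segment joining ${\pmb y}$ and ${\pmb y^*}$, consisting precisely of those points at least as close to ${\pmb y}$ as to ${\pmb y^*}$. The geometric fact I want to exploit is that ${\pmb y}$ and ${\pmb y^*}$ lie strictly on opposite sides of $\mathcal{H}$, so any point lying on the ${\pmb y}$-side cannot coincide with ${\pmb y^*}$.

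First I would note that hypothesis (c) is automatic: since $\|{\pmb y}-{\pmb y}\|=0 \leq \|{\pmb y}-{\pmb y^*}\|$, with the inequality strict by (a), the point ${\pmb y}$ lies strictly in the interior of $S$, i.e. strictly on the ${\pmb y}$-side of $\mathcal{H}$. This is recorded for completeness and is not actually needed for the conclusion.

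The key step is to test membership of ${\pmb y^*}$ in $S$ by substituting ${\pmb \eta}={\pmb y^*}$ into the defining inequality. Membership would require $\|{\pmb y^*}-{\pmb y}\| \leq \|{\pmb y^*}-{\pmb y^*}\| = 0$. Because a norm is nonnegative, this forces $\|{\pmb y^*}-{\pmb y}\|=0$, i.e. ${\pmb y}={\pmb y^*}$, contradicting (a); hence ${\pmb y^*}\notin S$. Equivalently, in the translated coordinates of Lemma~\ref{lconv} where ${\pmb y^*}=0$, the half-space reads $2{\pmb \eta}^T{\pmb y}-{\pmb y}^T{\pmb y}\geq 0$, and evaluating at ${\pmb \eta}={\pmb y^*}=0$ yields $-\|{\pmb y}\|^2\geq 0$, which again forces ${\pmb y}={\pmb y^*}$.

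There is no genuine obstacle here; the statement is essentially immediate from the definition of $S$ together with (a). The only point deserving a moment's care is to distinguish true exclusion from mere boundary membership: it is precisely hypothesis (a) that makes the defining inequality strict, placing ${\pmb y^*}$ strictly on the far side of $\mathcal{H}$ rather than on the separating hyperplane itself.
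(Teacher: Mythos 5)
Your proposal is correct. The key step --- substituting ${\pmb \eta}={\pmb y^*}$ into the defining inequality of $S$, obtaining $\|{\pmb y^*}-{\pmb y}\|\leq 0$ and hence ${\pmb y}={\pmb y^*}$ in contradiction with (a) --- is airtight, and your observation that hypothesis (c) holds automatically is also accurate. Note that the paper itself gives no argument for this theorem; it simply defers to the reference \cite{BDBM}, so there is no in-paper proof to compare against. Your version is self-contained and, as you correctly observe, does not actually need Lemma~\ref{lconv} at all: everything follows from the definition of $S$ together with positive-definiteness of the norm. One cosmetic remark: you describe $S$ as a closed half-space, which is right for a set defined by a non-strict inequality, whereas the paper's Lemma~\ref{lconv} calls it an ``open half plane''; the discrepancy is in the paper's wording, not in your argument, and it has no bearing on the validity of your proof.
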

\begin{proof}
See \cite{BDBM} for proof.
\end{proof}

\begin{theorem} \la{tmain2}
Under Assumptions \ref{an} and \ref{aconv}, suppose $\{{\pmb y},{\pmb y^*}\} \in \mR^N, i \in \{1,\cdots,N+1\}$ obeys: 

(a) ${\pmb y^*} \in co\{{\pmb x_1},\cdots,{\pmb x_{N+1}}\}$ 

(b) $\{\pmb y^*\} \neq \{\pmb y\}$ 

Then there exists an $\{i,j\}$ pair such that  $\|{\pmb x_i} - {\pmb y}\| < \|{\pmb x_i} - {\pmb y^*}\| $  and  $\|{\pmb x_j} - {\pmb y}\| < \|{\pmb x_j} - {\pmb y^*}\| $.
\end{theorem}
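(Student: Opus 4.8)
The plan is to work in the normalized coordinates of Lemma \ref{lconv}, taking ${\pmb y^*}=0$ after the distance-preserving translation and rotation, and to read off from that lemma the algebraic form of the ``closer to ${\pmb y}$'' condition. With ${\pmb y^*}=0$ the inequality $\|{\pmb x_i}-{\pmb y}\| < \|{\pmb x_i}-{\pmb y^*}\|$ is, exactly as in (\ref{hpsg}), equivalent to $2{\pmb x_i}^T{\pmb y} - {\pmb y}^T{\pmb y} > 0$, i.e. to $g({\pmb x_i}) > t$, where $g({\pmb \eta}) := {\pmb \eta}^T{\pmb y}$ is a single linear functional and $t := \tfrac12\|{\pmb y}\|^2 > 0$, the strict positivity coming from ${\pmb y}\neq{\pmb y^*}$ in hypothesis (b). Thus the entire claim reduces to showing that the open affine halfspace $\{g > t\}$, whose bounding hyperplane is precisely the separator $\mathcal{H}$ of Lemma \ref{lconv}, contains at least two of the vertices ${\pmb x_1},\dots,{\pmb x_{N+1}}$.

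Next I would bring in Assumption \ref{aconv} in barycentric form. Since ${\pmb y^*}=0$ lies in the \emph{open} convex hull, there are weights $c_i > 0$ with $\sum_{i=1}^{N+1} c_i = 1$ and $\sum_{i=1}^{N+1} c_i {\pmb x_i} = 0$; the strict positivity of every $c_i$ is exactly the interiority encoded by the open hull. Applying $g$ gives the balance identity $\sum_{i=1}^{N+1} c_i\, g({\pmb x_i}) = 0$. Because $t>0$, this identity rules out having all $g({\pmb x_i}) > t$ simultaneously, and, combined with Assumption \ref{an} (the ${\pmb x_i}$ are affinely independent, so they cannot all satisfy $g({\pmb x_i})=0$, which would force them onto the $(N-1)$-dimensional hyperplane $\{g=0\}$), it forces the values $g({\pmb x_1}),\dots,g({\pmb x_{N+1}})$ to straddle $0$ with at least one strictly positive and at least one strictly negative.

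The step I expect to be the crux is the upgrade from this sign information, ``some $g({\pmb x_i})>0$ and some $g({\pmb x_i})<0$,'' to the quantitative doubled statement, ``at least two vertices satisfy $g({\pmb x_i})>t$,'' since I must simultaneously promote the threshold from $0$ to $t$ and promote the count from one to two. My intended route is a contradiction argument: assume at most one vertex, say ${\pmb x_1}$, satisfies $g({\pmb x_1})>t$, while every other vertex has $g({\pmb x_k})\le t$. I would then try to exploit the balance identity $c_1 g({\pmb x_1}) = -\sum_{k\ge 2} c_k g({\pmb x_k})$ together with the non-degeneracy of the simplex to show that a single vertex cannot alone sustain the positive-weight barycentric balance without dragging a neighbouring vertex across $\mathcal{H}$ to the ${\pmb y}$-side. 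Controlling this interplay between the strictly positive weights $c_i$, the threshold $t$, and affine independence is the delicate part and is where I would concentrate almost all of the effort; I would also watch the boundary cases where some $g({\pmb x_k})$ equal $t$ exactly (a vertex on $\mathcal{H}$, hence not strictly closer to ${\pmb y}$), which are the configurations most likely to threaten the ``two'' conclusion.

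Finally, once two indices $i\neq j$ with $g({\pmb x_i})>t$ and $g({\pmb x_j})>t$ are secured, I would undo the normalization (the translation and rotation leave every Euclidean distance unchanged, as already noted in Lemma \ref{lconv}) and read the two strict inequalities back as $\|{\pmb x_i}-{\pmb y}\| < \|{\pmb x_i}-{\pmb y^*}\|$ and $\|{\pmb x_j}-{\pmb y}\| < \|{\pmb x_j}-{\pmb y^*}\|$, which exhibits exactly the asserted $\{i,j\}$ pair. The normalization, the reduction to the scalar functional $g$, and the barycentric balance identity are all routine; the genuinely nontrivial ingredient is the two-vertex crossing flagged in the previous paragraph, and I would treat the verification that a second vertex must lie strictly beyond $t$ as the true content of the theorem.
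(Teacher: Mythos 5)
Your reduction to the linear functional $g({\pmb \eta})={\pmb \eta}^{T}{\pmb y}$ and the barycentric balance $\sum_{i}c_{i}\,g({\pmb x_i})=0$ is sound as far as it goes, but the step you defer as ``the true content of the theorem'' --- upgrading ``some $g({\pmb x_i})>0$'' to ``two indices with $g({\pmb x_i})>t$'' --- cannot be carried out, because the statement as written is false. Take $N=1$, ${\pmb x_1}=-1$, ${\pmb x_2}=1$, ${\pmb y^*}=0\in co\{{\pmb x_1},{\pmb x_2}\}$, ${\pmb y}=1/2$: then $g({\pmb x_1})=-1/2$, $g({\pmb x_2})=1/2$, $t=1/8$, and only ${\pmb x_2}$ satisfies $g>t$, i.e.\ only one sensor is strictly closer to ${\pmb y}$ than to ${\pmb y^*}$, whereas the theorem demands a pair. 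Worse, since the hypotheses constrain ${\pmb y^*}$ but not ${\pmb y}$, even one such sensor need not exist: with ${\pmb y}=10$ one gets $t=50$ while $g({\pmb x_i})\in\{-10,10\}$, so no sensor is closer to ${\pmb y}$. The balance identity pins the $c_i$-weighted average of the $g({\pmb x_i})$ at level $0$ only; nothing forces any value, let alone two, above the strictly positive threshold $t$, and no amount of affine independence repairs this.

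For what it is worth, the paper supplies no argument to compare against (it cites \cite{BDBM}), but the version actually invoked later --- in the proof of Theorem \ref{tmain}, to preclude (\ref{uniq3}) --- assumes \emph{both} ${\pmb y}$ and ${\pmb y^*}$ lie in $co\{{\pmb x_1},\cdots,{\pmb x_{N+1}}\}$ and concludes one inequality in each direction: some $i$ with $\|{\pmb x_i}-{\pmb y}\|<\|{\pmb x_i}-{\pmb y^*}\|$ and some $j$ with $\|{\pmb x_j}-{\pmb y}\|>\|{\pmb x_j}-{\pmb y^*}\|$. That corrected statement follows at once from your own machinery applied twice: writing ${\pmb y}=\sum_{i}\beta_{i}{\pmb x_i}$ with $\beta_{i}>0$ gives $\sum_{i}\beta_{i}\,g({\pmb x_i})=\|{\pmb y}\|^{2}=2t>t$, hence some $g({\pmb x_i})>t$; and $\sum_{i}c_{i}\,g({\pmb x_i})=0<t$ gives some $g({\pmb x_j})<t$; the two indices are automatically distinct. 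I would prove that statement rather than hunt for a second vertex beyond $t$, which in general does not exist.
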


\begin{proof} 
See \cite{BDBM} for proof.
\end{proof}

\begin{theorem} \la{tmain}
Consider (\ref{cost}) under assumptions \ref{amon}, \ref{an} and \ref{aconv}. Consider ${\pmb y} \in   co\{{\pmb x_1},\cdots,{\pmb x_{N+1}}\}$. Then there holds: 
\[
\frac{\partial J({\pmb y})}{\partial {\pmb y}} =0 \Leftrightarrow 
J({\pmb y})= \underset{{\pmb y}}{\operatorname{argmax}} \left(J({\pmb y})\right)\] 

Further, ${\pmb y}^*$ is unique.
\end{theorem}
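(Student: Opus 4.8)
The plan is to eliminate the nuisance intensity $A$ by profiling and then reduce the whole statement to a single directional-derivative inequality. First I would substitute the closed-form maximiser $\hat{A}({\pmb y})$ of (\ref{Aml}) into (\ref{cost}), so that $J$ becomes a function of ${\pmb y}$ alone with $z_i=\hat{A}({\pmb y})\,e^{-\aa d_i}/d_i^{2}$. A short computation using (\ref{orig_der}) and $\partial d_i/\partial {\pmb y}=-({\pmb x_i}-{\pmb y})/d_i$ gives
\begin{equation*}
\frac{\partial J({\pmb y})}{\partial {\pmb y}}=\sum_{i=1}^{N+1}(s_i-z_i)\,\frac{1}{d_i}\Big(\aa+\frac{2}{d_i}\Big)({\pmb x_i}-{\pmb y}),
\end{equation*}
while the defining relation for $\hat{A}$ yields the bookkeeping identity $\sum_{i}(s_i-z_i)=0$. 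Evaluating at ${\pmb y}={\pmb y^*}$ gives $d_i=d_i^{*}$ and $\hat{A}({\pmb y^*})=A^{*}$, hence $z_i=s_i$ and the gradient vanishes, so ${\pmb y^*}$ is a stationary point.

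Next I would reduce the entire theorem to the claim
\begin{equation*}
\Big\langle \frac{\partial J({\pmb y})}{\partial {\pmb y}},\,{\pmb y^*}-{\pmb y}\Big\rangle>0 \qquad \text{for every } {\pmb y}\in co\{{\pmb x_1},\cdots,{\pmb x_{N+1}}\},\ {\pmb y}\neq{\pmb y^*}.
\end{equation*}
This inequality settles everything: it shows the gradient cannot vanish away from ${\pmb y^*}$, so ${\pmb y^*}$ is the only stationary point in the open hull; and, restricting $J$ to the segment $(1-t){\pmb y}+t{\pmb y^*}$ (which lies in the hull by convexity), it forces the restriction to be strictly increasing, whence $J({\pmb y})<J({\pmb y^*})$ for all ${\pmb y}\neq{\pmb y^*}$. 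Thus ${\pmb y^*}$ is simultaneously the unique stationary point and the unique global maximiser, which is exactly the asserted equivalence together with uniqueness.

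To prove the inequality I would write the inner product as $\sum_i (s_i-z_i)\,c_i\,u_i$ with $c_i=\frac{1}{d_i}(\aa+2/d_i)>0$ and $u_i=\langle {\pmb x_i}-{\pmb y},{\pmb y^*}-{\pmb y}\rangle=\tfrac12(d_i^{2}-d_i^{*2}+\|{\pmb y^*}-{\pmb y}\|^{2})$. The point is that the residual $s_i-z_i=A^{*}\phi(d_i^{*})-\hat{A}\phi(d_i)$, where $\phi(d)=e^{-\aa d}/d^{2}$ is strictly decreasing by Assumption \ref{amon}, and the geometric weight $u_i$ are both monotone increasing in $d_i$ for fixed $d_i^{*}$: a sensor relatively far from the current estimate contributes a positive residual and a positive projection, and one relatively near contributes the opposite. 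Combined with the zero-sum constraint $\sum_i (s_i-z_i)=0$, this positive association makes the weighted sum a strictly positive covariance-type quantity. Theorem \ref{tmain2} supplies the non-degeneracy needed for strictness: since ${\pmb y^*}$ lies in the open hull, at least two sensors are strictly closer to ${\pmb y}$ than to ${\pmb y^*}$, and since ${\pmb y}$ is in the hull as well, at least two are strictly closer to ${\pmb y^*}$, so the residuals are not all zero and the association is strict.

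The main obstacle is precisely this step: the intensity is coupled across all sensors through $\hat{A}({\pmb y})$, so the sign of an individual residual is governed not by the comparison $d_i$ versus $d_i^{*}$ alone but by $\phi(d_i^{*})/\phi(d_i)$ relative to the aggregate ratio $\hat{A}/A^{*}$, and a term-by-term sign match fails. I would overcome this by using $\sum_i(s_i-z_i)=0$ to subtract a common reference level and then pairing the two monotonicities in $d_i$, rather than signing each summand. As an independent check on the global-maximiser half of the statement, I note the clean identity $J({\pmb y^*})-J({\pmb y})=\sum_i s_i\log(s_i/z_i)\geq0$, a Gibbs/Kullback--Leibler bound that uses $\sum_i s_i=\sum_i z_i$, with equality iff $s_i=z_i$ for all $i$; the spurious equality case $\phi(d_i)=\kappa\,\phi(d_i^{*})$ with $\kappa\neq1$ is excluded by the separating-hyperplane Lemma \ref{lconv}, since placing all sensors on one side of the bisector would expel either ${\pmb y^*}$ or ${\pmb y}$ from the hull, and then strict monotonicity together with the general-position Assumption \ref{an} forces $d_i=d_i^{*}$ for all $i$, i.e. ${\pmb y}={\pmb y^*}$.
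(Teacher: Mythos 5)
There is a genuine gap at the center of your argument. Everything you prove rigorously --- the profiled gradient formula, the identity $\sum_i(s_i-z_i)=0$, the fact that ${\pmb y^*}$ is stationary, and the Gibbs/KL computation $J({\pmb y^*})-J({\pmb y})=\sum_i s_i\log(s_i/z_i)\ge 0$ with its equality case excluded by the separating-hyperplane lemma --- only establishes that ${\pmb y^*}$ is the unique global maximizer. The heart of the theorem is the other direction: that the open hull contains no \emph{other} stationary point, so that gradient ascent cannot stall. You reduce this to the inequality $\langle \partial J/\partial{\pmb y},\,{\pmb y^*}-{\pmb y}\rangle>0$, but the ``covariance-type'' argument you sketch does not prove it. Chebyshev-style positive association requires the sequences $(s_i-z_i)_i$ and $(c_iu_i)_i$ to be similarly ordered \emph{across the index} $i$; your two monotonicities are each in $d_i$ ``for fixed $d_i^*$'', and since $d_i^*$ varies from sensor to sensor this does not yield co-monotonicity. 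Concretely, the residual is ordered by $\phi(d_i^*)$ relative to $(\hat A/A^*)\phi(d_i)$, the geometric weight by $d_i^2-d_i^{*2}$, and these are different functions of the pair $(d_i,d_i^*)$; worse, $c_i=\frac{1}{d_i}(\alpha+\frac{2}{d_i})$ is \emph{decreasing} in $d_i$, so $c_iu_i$ is not even monotone in $d_i$. With only $N+1$ terms there is no averaging to fall back on, and ``subtracting a common reference level'' is not a proof. As stated, the claimed inequality (which is strictly stronger than the theorem) remains an assertion.

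The missing ingredient is the paper's barycentric/null-space step, and it combines beautifully with identities you already have. Since ${\pmb y}\in co\{{\pmb x_1},\cdots,{\pmb x_{N+1}}\}$ and the sensors are in general position (Assumption \ref{an}), the matrix $[\,{\pmb x_1}-{\pmb y}\;\cdots\;{\pmb x_{N+1}}-{\pmb y}\,]$ has rank $N$ and its one-dimensional null space is spanned by the strictly positive barycentric vector ${\pmb \beta}$; hence any null vector has entries all positive, all negative, or all zero. At a stationary point your gradient reads $\sum_i (s_i-z_i)c_i({\pmb x_i}-{\pmb y})=0$ with $c_i>0$, so the residuals $s_i-z_i$ must be all positive, all negative, or all zero --- and the first two cases are instantly killed by your own bookkeeping identity $\sum_i(s_i-z_i)=0$ (the paper instead excludes them via Theorems \ref{tmain1}--\ref{tmain2}). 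Thus $s_i=z_i$ for all $i$, and your KL equality case finishes the uniqueness. If you replace the unproven directional-derivative inequality with this null-space argument, your profiled-likelihood route becomes a complete and arguably cleaner proof than the paper's.
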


\begin{proof}
Since ${\pmb y} \in   co\{{\pmb x_1},\cdots,{\pmb x_{N+1}}\}$, there exist $\beta_{i}>0$, $\forall i \in \{1,\cdots,N+1\}$, such that 
\be \la{bsum}
\sum_{i=1}^{N+1}\beta_{i}=1, ~ \sum_{i=1}^{N+1}\beta_{i} {\pmb x_i}={\pmb y}.
\ee 
From (\ref{bsum}) we obtain: 
\[
\sum_{i=1}^{N+1}\beta_{i} {\pmb x_i}=\left (   \sum_{i=1}^{N+1}\beta_{i}  \right ){\pmb y}
\Leftrightarrow \sum_{i=1}^{N+1}\beta_{i} ({\pmb x_i}-{\pmb y})=0.
\] 
In other words ${\pmb \beta}=[\beta_{1},\cdots, \beta_{N+1}]^\top$ is in the right nullspace of the matrix:
$
{\pmb {\cal X}}({\pmb y})=\left [\begin{matrix}    {\pmb x_1}-{\pmb y} & \cdots &{\pmb x_{N+1}}-{\pmb y}   \end{matrix}\right ].
$
As the ${\pmb x_i}$'s do not lie on an $(N-1)$-dimensional hyperplane ${\pmb {\cal X}}({\pmb y})$ has rank $N$  for all ${\pmb y}\in \mR^N$. Thus its nullspace has dimension 1, and as $\beta_i>0$, all its non-zero null vectors have elements that are either all positive, or are all negative. Now suppose 
\be \la{part0}
\frac{\partial J({\pmb y})}{\partial {\pmb y}} =0.
\ee 
Notice that (\ref{yml}) can be re-written so that the gradient of the profit function becomes: 
\begin{equation} \la{expgrad}
\frac{\partial J({\pmb y})}{\partial {\pmb y}} =\left. \sum\limits_{i=1}^{N+1} \left( \frac{s_i}{\frac{e^{-\aa d_i}}{d_i^2}}-A\right) \left(\aa + \frac{2}{d_{i}} \right)\frac{e^{-\aa d_{i}}}{d_{i}^3} \left({\pmb x}_i-{\pmb y} \right)\right .
\end{equation}
Define: 
\be \la{eta}
\xi_{i}= \left( \frac{s_i}{\frac{e^{-\aa d_i}}{d_i^2}}-A\right) \left(\aa + \frac{2}{d_{i}} \right)\frac{e^{-\aa d_{i}}}{d_{i}^3} 
\ee 
and notice that we can rewrite (\ref{expgrad}) to obtain
\begin{equation} \la{gradient1}
\begin{split}
\frac{\partial J({\pmb y})}{\partial {\pmb y}} = \sum_{i=1}^{N+1} \xi_{i} ({\pmb x_i}-{\pmb y}) 
\end{split}
\end{equation}
From  (\ref{gradient1}) ,  ${\pmb \xi}=[\xi_{1},\cdots, \xi_{N+1}]^\top$ is in the null space of ${\pmb {\cal X}}({\pmb y})$ if $\frac{\partial J({\pmb y})}{\partial {\pmb y}} =0$. 
Now suppose ${\pmb \xi}\neq {\pmb 0}$. Then every $\xi_{i}$ is either positive or every one of them is negative. Suppose $\xi_{i} > 0, i \in \{1, \cdots, N+1\}$.
This implies that 
\begin{align} \label{uniq}
\begin{split}
\frac{s_i}{\frac{e^{-\aa d_i}}{d_i^2}}-A &>0 ~\forall i \in \{1,\cdots, N+1\}
\end{split}
\end{align} 
Now suppose there are two solutions, ${\pmb y^*}$ and ${\pmb y} $, then (\ref{uniq}) can be written as:
\begin{align} \label{uniq1}
\begin{split}
A^* & \left( \frac{e^{-\aa d_i^*}}{d_i^{*2}}    \frac{d_i^2} {e^{-\aa d_i}} - 1 \right) > 0  \\
\frac{e^{-\aa d_i^*}}{d_i^{*2}} & > \frac{e^{-\aa d_i}}{d_i^2} ~\forall i \in \{1,\cdots, N+1\}
\end{split}
\end{align} 
(\ref{uniq1}) therefore reduces to:
\begin{equation} \label{uniq2}
\frac{e^{-\aa \|{\pmb x_i} -{\pmb y} \|}}{\|{\pmb x_i} -{\pmb y} \|} < \frac{e^{-\aa \|{\pmb x_i} -{\pmb y^*} \|}}{\|{\pmb x_i} -{\pmb y^*} \|} ~\forall i~\in \{1,\cdots, N+1\}
\end{equation} 
The strictly decreasing nature of $\frac{e^{-\aa \|{\pmb x_i} -{\pmb y} \|}}{\|{\pmb x_i} -{\pmb y} \|} \forall i$ implies:
\begin{equation} \label{uniq3}
\|{\pmb x_i} -{\pmb y} \| > \|{\pmb x_i} -{\pmb y^*} \| ~ \forall i ~\in \{1,\cdots, N+1\}
\end{equation} 
 Now since both $\{{\pmb y},{\pmb y^*}\} \in co\{{\pmb x_1}, \cdots, {\pmb x_{N+1}}\}$, the saparating hyperplane theorem precludes (\ref{uniq3}). 
Hence $\eta_{i} \not >0 \forall i ~\in \{1,\cdots, N+1\}$. A similar argument can be made to show that $\eta_{i} \not <0  \forall i~\in \{1,\cdots, N+1\}$. This means $\eta_{i} =0 \forall i~\in \{1,\cdots, N+1\}$.  
Further, suppose that $\eta_{i} =0 ~\forall i~\in \{1,\cdots, N+1\}$, a similar analysis will lead to  \[ \|{\pmb x_i} -{\pmb y} \| = \|{\pmb x_i} -{\pmb y^*} \| ~\forall i ~\in \{1,\cdots, N+1\}\]  leading to the uniqueness of the solution. 
Hence  \[
\frac{\partial J({\pmb y})}{\partial {\pmb y}} =0  \Leftrightarrow  
J({\pmb y})= \underset{{\pmb y}}{\operatorname{argmax}} \left(J({\pmb y})\right)\]  
This concludes the proof.
\end{proof}
\begin{remark}
The result is counter intuitive; that we can still localize uniquely with exactly  $N+1$ having an additional unknown variable $A$. We can illustrate this result to be true in the 1-D case for the conventional RSS model where path loss coefficient $\alpha=0$ in (\ref{rss}). In this case, two sensor measurements is sufficient to localize the source location even though there is an additional unknown parameter $A$. Consider Figure \ref{fig:1d}.
\begin{figure}[!htb]
\begin{center}
\includegraphics[scale=.4]{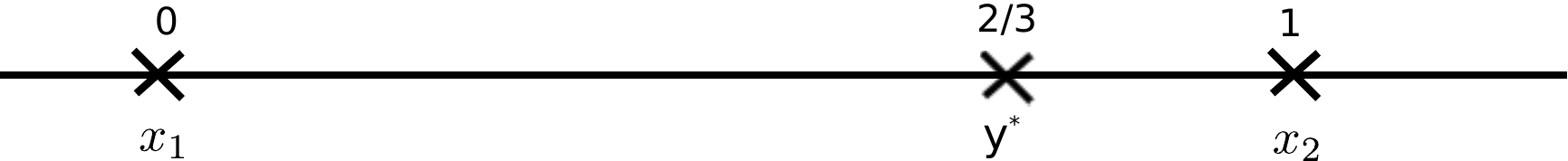}
\caption{Illustration of a source in $co\{x_1,x_2\}$}
\label{fig:1d}
\end{center}
\end{figure}
With $\alpha =0$ and arbitrary $A$, the measurements at the two sensors are $s_1 = 2.25A$ and $s_2= 9A$ which  will result in the  maximum likelihood equations: 
\bea \la{1deq1}
(1-y)^2= 0.25 y^2, ~
0 < y<1
\eea 
From (\ref{1deq1}), $y \in \{\frac{2}{3},2\}$ and the restriction on  $y$ ensures only $\frac{2}{3}$ is the unique admissible maximum likelihood solution.  
\end{remark}
Consequently, our results show that if the source is  $co\{{\pmb x_1}, \cdots, {\pmb x_{N+1}}\}$, there is only 1 optimal solution and $co\{{\pmb x_1}, \cdots, {\pmb x_{N+1}}\}$ is without a false maxima. Localization is therefore guaranteed provided the location estimate never leaves $co\{{\pmb x_1}, \cdots, {\pmb x_{N+1}}\}$. 

It is however conceivable that the solution at an epoch of iteration of the algorithm may leave $co\{{\pmb x_1}, \cdots, {\pmb x_{N+1}}\}$. We follow the approach in \cite{BDBM} to tackle the instances when the estimate leaves $co\{{\pmb x_1}, \cdots, {\pmb x_{N+1}}\}$. The argument in \cite{BDBM} that such a projection based algorithm will converge in probability as long as the source is inside  $co\{{\pmb x_1}, \cdots, {\pmb x_{N+1}}\}$ follows. In practice simulations presented in \ref{ssim}, the estimates rarely leave $co\{{\pmb x_1}, \cdots, {\pmb x_{N+1}}\}$.

\section{Simulations}  \la{ssim}
Two simulation scenarios for $\mathbb{R}^N,~N \in \{2,3\}$ are considered. The received signal at sensor $i$ is:  $s_{i}\sim \mbox{Poisson}\left(\frac{A}{d_i^2} e^{-\alpha d_i} +w_i \right)$ and
$w_i$ is the  background noise at sensor $i$. The $SNR$ is computed as: 
%$$SNR = 10\log_{10}\left (\frac{\sum_{i=1}^{N+1} \frac{A e^{-\alpha d_i}}{d_i^2}}{\sum_{i=1}^{N+1}w}\right)=10\log_{10}\left (\frac{1}{w(N+1)} {\sum_{i=1}^{N+1} \frac{A e^{-\alpha d_i}}{d_i^2}}\right).$$ 
$$SNR =10\log_{10}\left (\frac{A}{\sum_{i=1}^{N+1} w_i} {\sum_{i=1}^{N+1} \frac{e^{-\alpha d_i}}{d_i^2}}\right).$$ 
In all cases $\aa=0.0068$, $A=5\times 10^7$, $\mu=10^{18}$. The root mean squared error (RMSE) is averaged over 10000 random initial start points all within $co\{{\pmb x_1}, \cdots, {\pmb x_{N+1}}\}$. The algorithm runs for no more than 500 iterations. Also, for each iteration of each run, the $s_i$'s are generated independently. A projection augmented gradient ascent maximization of (\ref{cost}) under (\ref{orig}) is performed. The fact that the actual $s_i$'s differ from the value used in generating the gradient, confirms the robustness of the algorithm to uncertainties in the $s_i$ with unknown $A$.

Fig. \ref{fig:2Dsnrall} shows the  performance  when $N=2$. The sensors are located at (0,0), (200,0) and (50,200), the source is at (120,40), $co\{{\pmb x_1}, \cdots, {\pmb x_{N+1}}\}$. Fig. \ref{fmap} presents the map of the average location estimate provided by our algorithm for various SNR values, as well as the actual source location. 

\color{black}
Fig. \ref{ftime} is for gauging the convergence speed. For an SNR of 16.5dB,  Fig. \ref{ftime} plots the RMSE as a function of the iteration index $k$. The RMSE at each value of $k$ is obtained by averaging over the 10000 random runs described above. The fast rate of convergence is self-evident.
%\vspace{-0.1 in}
\begin{figure}[!htb]
\centering
\begin{subfigure}{.25\textwidth}
  \centering
\includegraphics[width=1\linewidth]{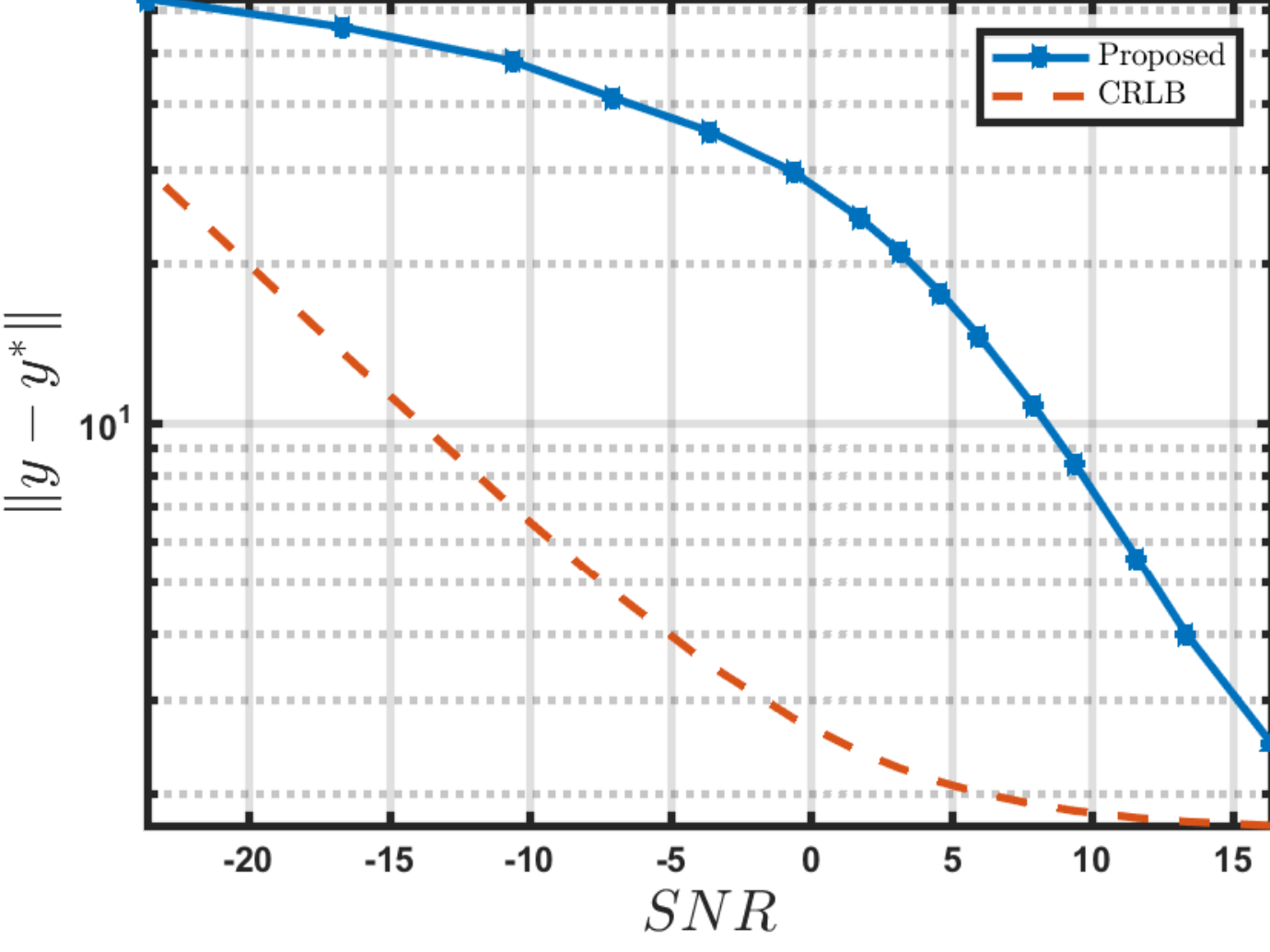}
\caption{ RMSE against SNR.}
  \label{fig:2Dsnrall}
\end{subfigure}%
\begin{subfigure}{.25\textwidth}
  \centering
  \includegraphics[width=1\linewidth]{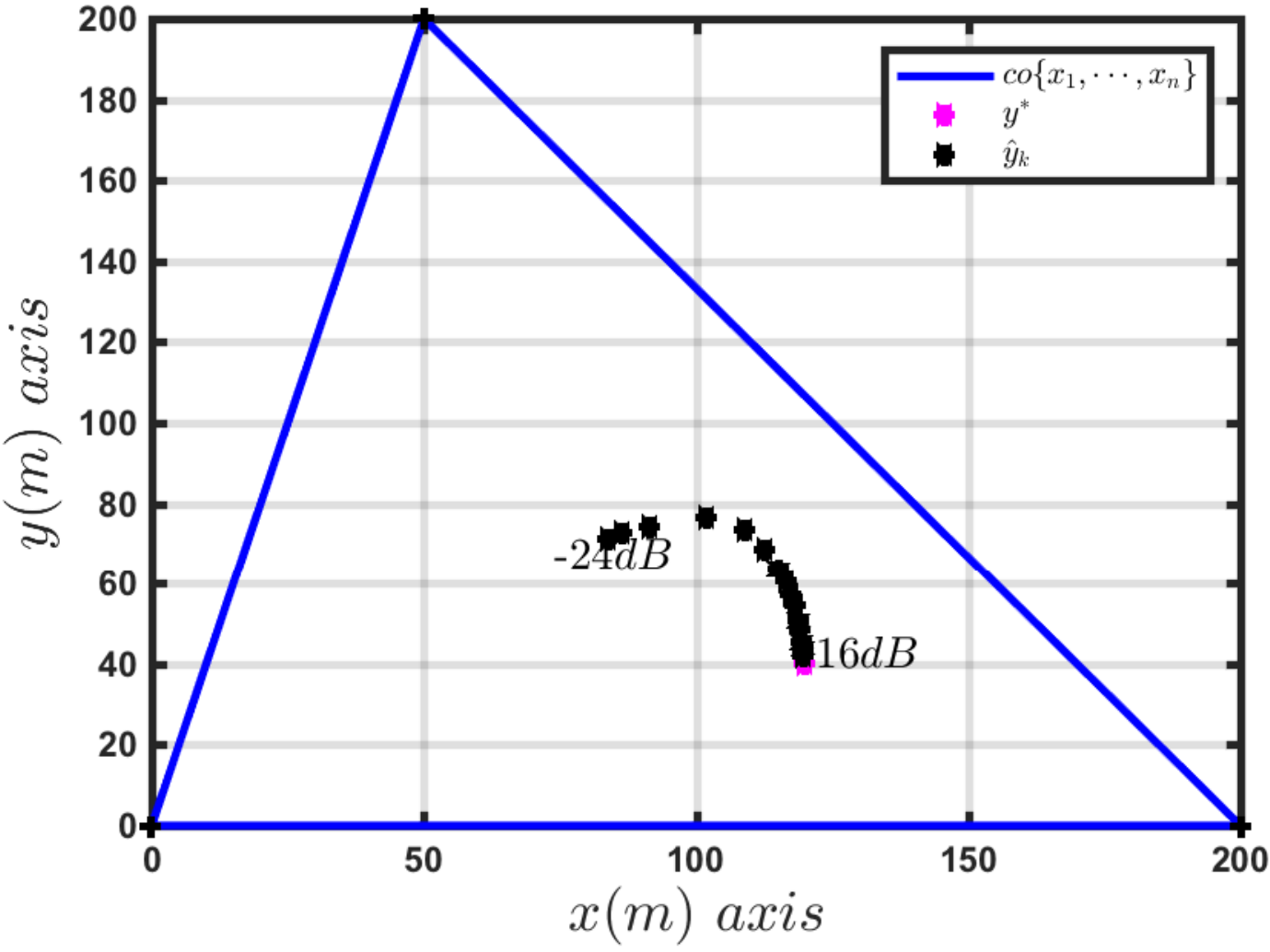}
  \caption{Average source location}
  \label{fmap}
\end{subfigure}
\caption{Simulations for $N=2$}

\label{fig:test}
\end{figure}

\begin{figure}[!htb]
\centering
\begin{subfigure}{.25\textwidth}
  \centering
\includegraphics[width=1\linewidth]{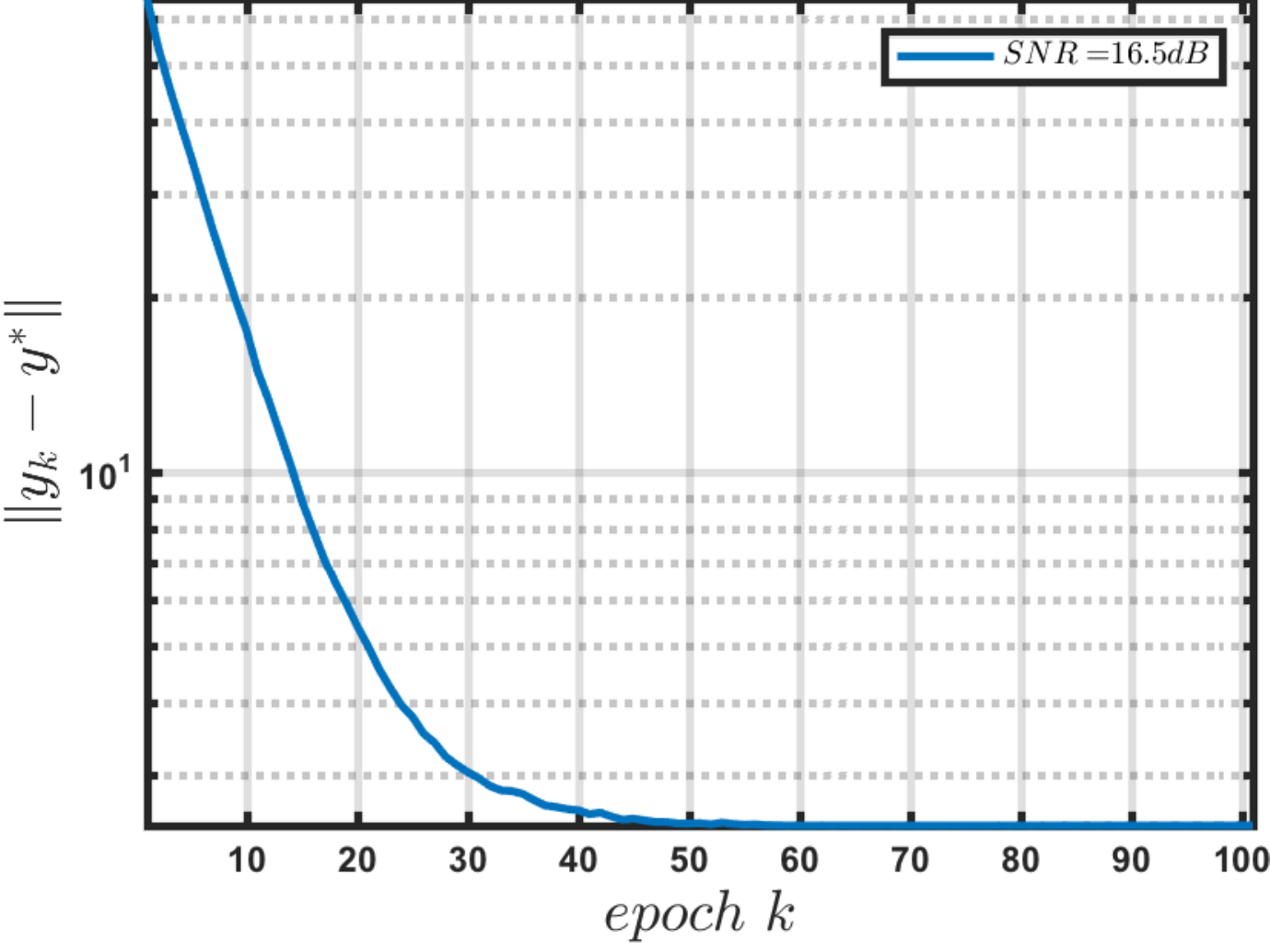}
\caption{ RMSE vs $epoch~k$}
  \label{ftime}
\end{subfigure}%
\begin{subfigure}{.25\textwidth}
  \centering
  \includegraphics[width=\linewidth]{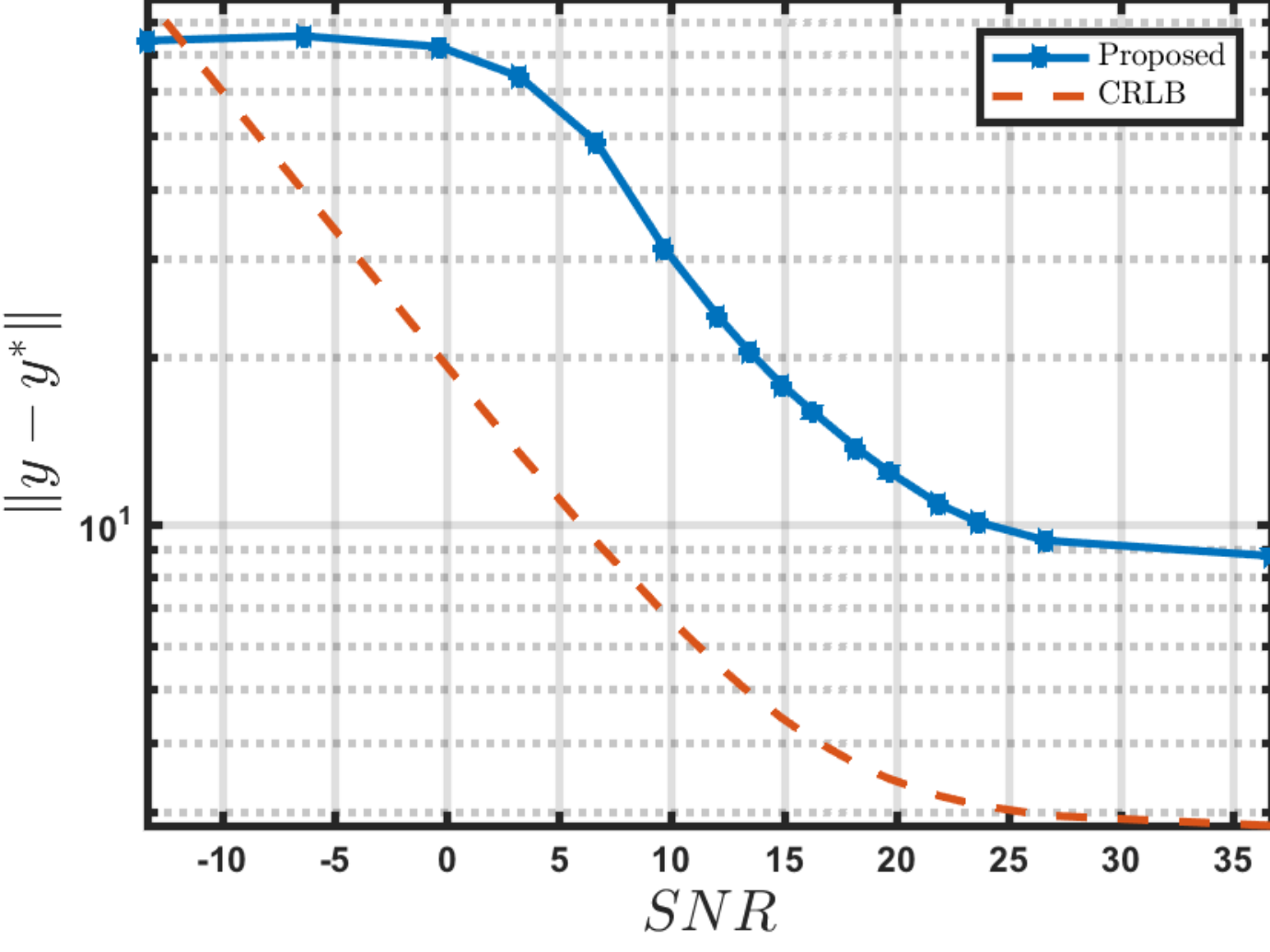}
  \caption{RMSE vrs SNR, $N=3$}
  \label{fig:3Dsnrall}
\end{subfigure}
\caption{Average converging rate and $\mathbb{R}^3$ simulations.}

\label{fig:test}
\end{figure}

%\vspace{+.10 in}
The 3-dimensional counterpart of  Fig. \ref{fig:2Dsnrall} is depicted in Fig. \ref{fig:3Dsnrall}. The  sensors are at (0,0,0), (200,0,0), (0,200,0) and (0,0,100) and the source is at (10,20,10). The performance is depicted in figure \ref{fig:3Dsnrall}.

It should be noted that in these simulations, except in low SNR regimes, the estimates do not leave $co\{{\pmb x_1}, \cdots, {\pmb x_{N+1}}\}$. Even with low SNRs they leave the $co\{{\pmb x_1}, \cdots, {\pmb x_{N+1}}\}$ only  about $10^{-3}$\% of times. Further, the expectation of the signal at the sensors in the absence of noise are very small. For $N=2$ the received signals in the absence of noise are $[1276, 3192, 490]^T$ whiles those for $N=3$ are $[7054, 372 ,449,3095]^T$. The simulations show the robustness of the algorithm.

\section{Conclusion}\la{scon}

A projection based  gradient ascent localization  of radioactive sources has been presented. It has been shown that if the source lies in the $co\{{\pmb x_1}, \cdots, {\pmb x_{N+1}}\}$, then the maximum likelihood estimates has no false stationary points. The algorithm is proved to achieve global uniform asymptotic convergence in probability with simulations demonstrating robustness of algorithm.

% Can use something like this to put references on a page
% by themselves when using endfloat and the captionsoff option.

\nocite{FDA} 
\nocite{hero} 
\nocite{8auth} 
\nocite{yee2011comparison}
\nocite{6485009}
\nocite{rao2008localization}
\nocite{howse2001least}
\nocite{gunatilaka2007localisation}
\nocite{morelande2009radiological}
\nocite{5779060}
%\begin{thebibliography}{1}
\bibliographystyle{IEEEtran}
\bibliography{refs.bib} 

\end{document}